\definecolor{violet}{rgb}{0.0,0.2,0.7}
\definecolor{rouge2}{rgb}{0.8,0.0,0.2}
\numberwithin{equation}{section}
\newcommand{\R}{\mathbb{R}}
\newcommand{\CC}{\mathbb{C}}
\newcommand{\Q}{\mathbb{Q}}
\renewcommand{\d}{\partial}
\newcommand{\vp}{\varphi}
\newcommand{\Ox}{\mathcal{O}_{X}}
\newcommand{\ep}{\varepsilon}
\newcommand{\la}{\langle}
\newcommand{\al}{\alpha}
\newcommand{\ra}{\rangle}
\renewcommand{\ge}{\geqslant}
\renewcommand{\le}{\leqslant}
\newcommand{\Ric}{\mathrm{Ric} \,}
\newcommand{\om}{\omega}
\newcommand{\omtlc}{\om_{t,lc}}
\newcommand{\omke}{\omega_{\rm KE}}
\newcommand{\ddc}{dd^c}
\newcommand{\xreg}{X_{\rm reg}}
\newcommand{\vpte}{\varphi_{t,\ep}}
\newcommand{\ute}{u_{t,\varepsilon}}
\newcommand{\vpe}{\varphi_{\varepsilon}}
\newcommand{\ue}{u_{\varepsilon}}
\newcommand{\ve}{v_{\varepsilon}}
\newcommand{\we}{w_{\varepsilon}}
\newcommand{\fe}{f_{\varepsilon}}
\newcommand{\Fe}{F_{\varepsilon}}
\newcommand{\Dlc}{D_{ lc}}
\newcommand{\Dklt}{D_{ klt}}
\newcommand{\ome}{\om_{\varepsilon}}
\newcommand{\omp}{\om_{\rm P}}
\newcommand{\ompe}{\om_{\rm P, \ep}}
\newcommand{\omc}{\om_{\chi}}
\newcommand{\Supp}{\mathrm {Supp}}
\newcommand{\tr}{\mathrm{tr}}
\newcommand{\psie}{\psi_{\ep}}
\newcommand{\MA}{\mathrm{MA}}
\renewcommand{\Re}{\mathrm{Re}}
\newcommand{\snc}{(X, D)_{\rm reg}}
\newcommand{\DD}{\mathbb D}
\newcommand{\uee}{u_{\ep, \eta}}
\newcommand{\Psie}{\Psi_{\ep}}
\newcommand{\p}{\partial}
\newtheorem*{thma}{Theorem A}
\newtheorem*{thmb}{Theorem B}
\title[On the boundary behavior of Kähler-Einstein metrics on lc pairs]{On the boundary behavior of Kähler-Einstein metrics on log canonical pairs }
\author{Henri Guenancia}
\address{Department of Mathematics \\
Stony Brook University \\
 Stony Brook, NY 11794-3651 USA}
\email{guenancia@math.sunysb.edu}
\urladdr{http://www.math.sunysb.edu/~guenancia}
\author{Damin Wu}
\address{Department of Mathematics\\
University of Connecticut\\
196 Auditorium Road,
Storrs, CT 06269-3009, USA}
\email{damin.wu@uconn.edu}
\begin{document}

\begin{abstract}
In this paper, we study the boundary behavior of the negatively curved Kähler-Einstein metric attached to a log canonical pair $(X,D)$ such that $K_X+D$ is ample. In the case where $X$ is smooth and $D$ has simple normal crossings support (but possibly negative coefficients), we provide a very precise estimate on the potential of the KE metric near the boundary $D$. In the more general singular case ($D$ being assumed effective though), we show that the KE metric has mixed cone and cusp singularities near $D$ on the snc locus of the pair. As a corollary, we derive the behavior in codimension one of the KE metric of a stable variety.
\end{abstract}
\maketitle

\section{Introduction}

This paper studies negatively curved Kähler-Einstein metrics on quasi-projective manifolds. This is of course a very broad topic which has witnessed a lot of developments since the foundational works of Aubin, Yau \cite{Aubin, Yau78}  in the compact case. Quickly after the resolution of Calabi's conjecture by Yau, many works have revolved around the (non-compact) complete case; let us mention Yau \cite{Yau4}, Cheng-Yau \cite{CY}, Mok-Yau \cite{MY}, R.Kobayashi \cite{KobR} and Tian-Yau \cite{Tia} in the negative scalar curvature case, and \cite{TY} in the Ricci-flat case to cite only a few of them. 

More recently, a lot of attention has been drawn to conical Kähler-Einstein metrics, which are non-complete metrics living on the complement of a (smooth) divisor in a compact manifold, having a very precise behavior near the divisor, cf. \cite{Maz, Jef, Don, Brendle, CGP, JMR, GP, Yao, DS}. \\

These classes of examples (in the negatively curved case) can be recast in a unified framework. Namely, when we are seeking negatively curved Kähler-Einstein metrics on the complement $X\setminus D$ of a smooth divisor $D$ (or merely with simple normal crossings) in a compact Kähler manifold $X$, then one has at some point to do an assumption on the positivity of the adjoint canonical bundle $K_X+D$. More precisely, the existence of a negatively curved KE metric with cuspidal singularities along $D$ such as in \cite{KobR, Tia} is equivalent to the ampleness of $K_X+D$. In the same vein, the existence of a negatively curved conical KE metric with cone angle $2\pi \beta$ along $D$ is equivalent to the ampleness of $K_X+(1-\beta)D$. \\

So what if now, we look at the problem from another angle? That is, what if instead of looking for Kähler-Einstein metrics on $X\setminus D$ having a prescribed behavior along $D$, we just start by assuming that the line bundle $K_X+aD$ is ample for some real number $a$, and see what kind of Kähler-Einstein metrics one can construct? Well, if $a\in (0,1]$, we end up with conical/cuspidal metrics because of what we explained above. In this paper, we will leave aside that case where $a>1$, and only study the situation where $a\in (-\infty, 1]$. More generally, take $D=\sum D_i$ be a simple normal crossings (snc) divisor, choose real numbers $a_i\in  (-\infty, 1]$, and assume that $K_X+\sum a_i D_i$ is ample. Is it possible to construct "reasonable" Kähler-Einstein metrics with negative scalar curvature on $X\setminus D$ that are naturally related to the data of the $a_i$'s?\\

This question has been studied from various points of view \cite{Wu,Wu2,G12,BG} and it seems that the framework of pluripotential theory could be the best fit as it yields a unified approach and treatment of the problems at stake. Indeed, it has been proved in \cite{BG} that given a pair $(X,D)$ where $D=\sum a_i D_i$ is a divisor with simple normal crossings support and coefficients $a_i\in (-\infty, 1]$ such that $K_X+D$ is ample, there exists a unique "weak" Kähler-Einstein metric $\omke$, smooth on $X\setminus D$ and satisfying $\Ric \omke = -\omke + \sum a_i [D_i]$ in the sense of currents. Moreover, the singularities of $\omke$ near $D$ are relatively mild as this current has finite energy, cf. \cite{GZ07}. What more do we know about $\omke$?

Well, first, if the pair is klt (i.e.,  $a_i<1$ for all $i$), then if follows from Ko\l odziej's estimate \cite{Kolo} that the metric has \textit{bounded} potentials. But as soon as some coefficient $a_i$ equals $1$, the potentials have to be unbounded. This can be seen using the Monge-Ampère formulation of the Kähler-Einstein problem which takes the form
\[(\om+\ddc \vp)^n = \frac{e^{\vp}\om^n}{\prod_{i} |s_i|^{2a_i}}\]
where $\om\in c_1(K_X+D)$ is a Kähler form, and $s_i$ is a defining section for $D_i$, whose associated line bundle we endow with a suitable hermitian metric (to get the condition on the Ricci curvature). Then, as $|s|^{-2}$ is not integrable, $\vp$ has to go to $-\infty$ near $\Dlc$ to garantee the integrability of the rhs. So one cannot expect bounded potentials. If now the divisors has only coefficients equal to $1$, then we know from \cite{KobR, Tia} that $\vp= - \sum_{i} \log (\log |s_i|^2)^2 + O(1)$, and that $\om_{\rm KE} $ has Poincaré singularities along $D$. We have an analogous expansion (i.e., loglog near $\Dlc$ + bounded term) if the coefficients are orbifold \cite{Tia}, or more generally of $a_i \in [0,1]$ \cite{G12,GP}. More generally, if the irreducible components of $D$ associated to coefficients $a_i<1$ do not meet any irreducible components of $D$ associated to coefficients $a_i=1$, the same result holds \cite{Wu2}. The first Theorem of this note aims to prove that the above expansion for the potential always holds regardless of the combinatoric of $D$:

\begin{thma}
\phantomsection
\label{thm:main}
Let $X$ be a compact Kähler manifold, $D=\sum a_i D_i$ be a divisor with simple normal crossings support having coefficients $a_i \in (-\infty, 1]$ and such that $K_X+D$ is ample. Let $\om \in c_1(K_X+D)$ a Kähler form, and let $\omke=\om+\ddc \vp_{\rm KE}$ be the Kähler-Einstein metric of $(X,D)$, i.e., $\Ric \omke = -\omke + [D]$. Then 
\[\vp_{\rm KE}   = - \sum_{a_i=1} \log \log^{2} \frac{1}{|s_i|^2}  +O(1) \]
\end{thma}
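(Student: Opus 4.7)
My approach is to establish the expansion via sharp super- and subsolution barriers for the complex Monge-Ampère equation
\[(\om + \ddc \vp)^n = \frac{e^{\vp}\,\om^n}{\prod_i |s_i|^{2a_i}}\]
and apply a comparison principle. Since $\vp_{\rm KE}$ is smooth on $X \setminus D$, is uniformly bounded away from $\Dlc$ by the results cited in the introduction (Ko\l odziej and local klt arguments), and is already understood near points of $\Dlc \setminus \Dklt$ by \cite{Wu2}, the statement is local near an intersection point $p \in \Dlc \cap \Dklt$. Choose snc coordinates $(z_1,\dots,z_n)$ at $p$ with $D = \{z_1\cdots z_k = 0\}$, relabelled so that $a_1 = \cdots = a_l = 1$ (the lc part) and $a_{l+1},\dots,a_k < 1$ (the klt part, possibly with $a_j < 0$).

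On a small polydisk $\DD^n$ around $p$, I would look for barriers of the form
\[\vp^\pm = -\sum_{i=1}^{l}\log\log^2 |z_i|^{-2} \;\pm\; \sum_{j=l+1}^{k} \lambda_j |z_j|^{2(1-a_j)} + C^{\pm},\]
for large enough constants $\lambda_j, C^\pm$. In snc coordinates, the complex Hessian of this ansatz is essentially diagonal, so $(\om + \ddc \vp^\pm)^n$ factorises as a product of one-dimensional contributions: a cusp factor $|z_i|^{-2}\log^{-2}|z_i|^{-2}$ in each lc direction, a conical factor $|z_j|^{-2a_j}$ in each klt direction, and a bounded form in the remaining directions. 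Matched against $e^{\vp^\pm}\om^n / \prod|s_i|^{2a_i}$, which has exactly the same product structure up to bounded factors, this yields the supersolution (resp.\ subsolution) inequality provided $\lambda_j, C^\pm$ are tuned correctly. The comparison principle applied to $\vp_{\rm KE}-\vp^\pm$ on $\DD^n$ then gives the desired estimate: the difference is controlled on the outer boundary by the already-known estimate away from $\Dlc$, and along $\Dlc$ both quantities blow up at the same matching rate. To make this rigorous across $\Dlc$, I would regularise by replacing each $a_i = 1$ by $a_i^{(\varepsilon)} = 1-\varepsilon$, obtain smooth KE potentials $\vp_\varepsilon$ with bounded (non-uniform in $\varepsilon$) potentials via Aubin-Yau, apply the smooth comparison principle to $\vp_\varepsilon - \vp_\varepsilon^\pm$, and pass to the limit $\varepsilon \to 0$ using the stability of the Monge-Ampère equation from \cite{BG}.

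The main obstacle, and exactly the difficulty that forced \cite{Wu2} to assume $\Dlc$ and $\Dklt$ disjoint, is controlling the cross terms in $(\om + \ddc \vp^\pm)^n$ at a crossing $\Dlc \cap \Dklt$: one must ensure that $\om + \ddc \vp^\pm$ remains a positive $(1,1)$-current, that the factorisation of its $n$-th power into "cusp $\times$ conical" contributions is sharp, and that none of the cross terms produced by mixing the unbounded cusp terms with the $|z_j|^{2(1-a_j)}$ terms spoil the matching with the right-hand side; these tasks must be performed uniformly in the regularisation parameter $\varepsilon$. The snc structure is essential, since in these coordinates the variables separate and the relevant computations reduce to one-dimensional expansions. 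I expect that a delicate case analysis is needed when some $a_j$ is very negative, where the profile $|z_j|^{2(1-a_j)}$ is very regular and may need to be replaced by a different perturbation (e.g.\ a smooth bounded one, so that the klt-direction factor in $\om + \ddc \vp^\pm$ is comparable to $\om$ rather than to $|z_j|^{-2a_j}\,idz_j\wedge d\bar z_j$) in order to reproduce the correct asymptotics of the right-hand side without destroying positivity.
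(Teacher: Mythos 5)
Your barrier-and-comparison strategy is a genuinely different route from the one taken in the paper. The paper moves the equation into a fixed Poincaré frame (set $\ue = \vpe + \sum_{a_k=1}\log\log^2|s_k|^2$), obtains a uniform lower bound on $\ue$ by Yau's minimum principle applied after absorbing the factors with $0<a_i<1$ into the regularized conical reference metric of \cite{CGP,G12}, then obtains only a \emph{partial} upper bound $\ue \le C + \sum_{a_i<0}a_i\log(|s_i|^2+\ep^2)$ (which diverges near $D_i$ with $a_i<0$) by a change of unknown, and finally upgrades this divergent pointwise bound to $\ue\le C$ by a mean-value/Green's-function argument: either pulling back through the quasi-coordinates of \cite{KobR,Tia} and using the Euclidean Green's function of a ball, or constructing a global positive Green's function for $\Delta_{\omp}-1$ on the complete finite-volume manifold $(M,\omp)$ in \S\ref{se:NewGRE}.

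There is however a genuine gap in your proposal, and it sits precisely at the point that is new in Theorem A, namely when some $a_j<0$. For the supersolution inequality you need
\[
(\om+\ddc\vp^+)^n \;\le\; e^{\vp^+}\om^n\!\Big/\!\prod_i|s_i|^{2a_i},
\]
and near $D_j$ with $a_j<0$ the right-hand side carries the \emph{vanishing} factor $|z_j|^{2|a_j|}$. Your candidate term $\lambda_j|z_j|^{2(1-a_j)}$ (with $1-a_j>1$) has bounded complex Hessian going to zero on $D_j$, so $\om+\ddc\vp^+\sim\om$ there and $(\om+\ddc\vp^+)^n$ does \emph{not} vanish; the fallback of a "smooth bounded perturbation, so that the klt-direction factor is comparable to $\om$" makes this worse, not better. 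A genuine supersolution with $\vp^+=O(1)$ in the $z_j$-direction must have $\om+\ddc\vp^+$ degenerating along $D_j$ at the rate $|z_j|^{2|a_j|}$ (this is the cone-angle-$>2\pi$ regime, where the model metric is $|z_j|^{2|a_j|}\,idz_j\wedge d\bar z_j + \dots$); achieving this with an explicit $\om$-psh profile, uniformly along $D_j$ and compatibly with the off-diagonal terms of $\om$, is not something that falls out of an snc product ansatz, and the paper's introduction flags exactly this obstruction. The cases that your barriers \emph{can} handle ($a_i\in[0,1]$, or $\Dklt\cap\Dlc=\varnothing$) are the ones already treated in \cite{G12,GP,Wu2}. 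This is why the paper settles for the divergent maximum-principle upper bound and then converts it into an $L^\infty$ bound by an integral representation rather than a pointwise comparison. As a secondary remark, the sign $-\lambda_j|z_j|^{2(1-a_j)}$ in your $\vp^-$ destroys $\om$-plurisubharmonicity near $D_j$ when $0<a_j<1$; both barriers should carry a $+$ sign on the conical term (as in the regularized cone potential), with the one-sided bounds coming from the additive constants $C^\pm$.
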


We will give two proofs of this result, both based on a approach involving Green's functions  but in different contexts. The two proofs share a common core: 
we start by partially regularizing the Monge-Ampère equation so as to make it of Poincaré-type as in \cite{KobR, Tia}, and then one will seek for uniform estimates on the potential, independent of the regularizing parameter. The lower bound is obtained using ideas involving approximate cone metrics, and already appearing in \cite{CGP, G12}. Then, using Yau's maximum principle for complete manifolds, we derive an upper bound of the potential involving $\sum_{a_i<0} a_i \log|s_i|^2$. Of course, the right hand side goes to $+\infty$ near the boundary divisor, so this estimate is not sufficient to prove Theorem A. This is where our two proofs take different paths. 

\noindent
The common idea is to estimate the supremum of the potential by its $L^1$ norm (which is controlled by the previous estimate) using Green's functions. The difficulty here is that there is no global positive Green's function for the Laplacian $\Delta$ on $X \setminus D_{lc}$  as follows from \cite{CY3} since a Poincaré-type metric has finite volume. On the other hand, one cannot use the local Green's function of $\Delta$ as in \cite{Wu2}, since the injectivity radius of the Poincar\'e metric shrinks to zero as the point tends to $D_{lc}$.  In the first approach, we pull-back the equation to some kind of universal cover to make the Poincaré metric into an euclidian one, so that one can use a standard local Green's function upstairs, derive an upper bound upstairs, and then push it back down to $M$. 

\noindent
In the second approach, we first construct a global Green's function associated with $\Delta_g-1$ on any complete Riemannian manifold $(M, g)$. For $M = X \setminus D_{lc}$ endowed with a Poincaré-type metric, we can control the asymptotic behavior of this function with sufficient precision so as to get an upper bound for our potential. \\

To go beyond Theorem A, it would be natural to expect higher order estimates on the potential of the Kähler-Einstein metric. As for Laplacian type estimates, this has already been done in \cite{G12,GP} whenever the coefficients of $D$ are non-negative. It would be challenging to extend these results to our more general setting, but if $D=-aH$ for some positive number $a$, $H$ being a smooth hypersurface, then of the main new issue is that we do not really have a global reference metric on $X\setminus H$ that would behave like $|z_1|^{2a}dz_1\wedge d\bar z_1 + \sum_{j>1}dz_j\wedge d\bar z_j$ near $H$, whenever it is locally given by $(z_1=0)$. \\

In the second part, we investigate the case of singular pairs $(X,D)$, i.e., $X$ is now a normal projective variety and $D$ an \textit{effective} Weil divisor on $X$ such that the pair $(X,D)$ has log canonical singularities. If one assumes that $K_X+D$ is ample, then we know from \cite{BG} that $(X,D)$ admits a unique Kähler-Einstein metric $\omke$ (see Section \ref{sec:sing} for the related definitions), which is smooth on $\xreg \setminus \Supp(D)$. We will study the behavior of $\omke$ near $D$, and more precisely at the points where $X$ is smooth and $D$ has simple normal crossings support:

\begin{thmb}
\label{thm2}
Let $(X, D)$ be a projective log canonical pair such that $K_X+D$ is ample. Then its Kähler-Einstein metric $\omke$ has mixed cone and cusp singularities along $D$ on the snc locus $\snc$ of the pair.
\end{thmb}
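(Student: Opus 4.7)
The plan is to reduce Theorem B to Theorem A (together with the Laplacian-type estimates available in the snc case) by combining a log resolution with an approximation argument. The key observation is that although a log resolution $\pi \colon Y \to X$ typically introduces exceptional divisors of negative coefficient, these all lie above $X \setminus \snc$; over the snc locus of interest, the situation is as tame as in the smooth case treated by Theorem A. Concretely, choose $\pi \colon Y \to X$ a log resolution of $(X, D)$ that is an isomorphism over $\snc$, and write $K_Y + \wt D = \pi^{\ast}(K_X + D)$. The log canonical assumption on $(X, D)$ translates into the fact that the $\R$-divisor $\wt D$ has snc support and coefficients $\wt a_j \in (-\infty, 1]$, and since $\pi^{-1}(\snc)$ is disjoint from $\exc(\pi)$, on this open set the coefficients of $\wt D$ coincide with those of $D$, hence lie in $[0, 1]$.

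Fix a Kähler form $\om \in c_1(K_X + D)$ and a Kähler form $\theta$ on $Y$; for $\ep > 0$, the class $c_1(K_Y + \wt D) + \ep[\theta]$ is Kähler, represented by $\om_{Y, \ep} := \pi^{\ast}\om + \ep \theta$. Applying a twisted version of Theorem A to this setup (the twist absorbing the $\ep \theta$ perturbation into the Monge-Ampère right-hand side), one produces a Kähler-Einstein type metric $\om_\ep = \om_{Y, \ep} + \ddc \vpe$ on $Y \setminus \wt D$ whose potential satisfies
\[
\vpe = - \sum_{\wt a_j = 1} \log \log^{2} \frac{1}{|s_j|^2} + O_\ep(1),
\]
with the $O_\ep(1)$ a priori $\ep$-dependent.

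The heart of the proof is to obtain estimates on $\om_\ep$ that are uniform in $\ep$. First, one revisits the proof of Theorem A and argues that the $O(1)$ term above is in fact uniform; this should follow because the Green's function and energy/integrability inputs depend continuously on $\om_{Y, \ep}$ as $\ep \to 0$. Second, and more importantly, one needs uniform Laplacian (and higher-order) estimates for $\om_\ep$ on compact subsets of $\pi^{-1}(\snc) \setminus \wt D$. On this open set all relevant coefficients of $\wt D$ lie in $[0, 1]$, so one can apply the mixed cone and cusp machinery of \cite{G12, GP}: construct a local reference metric of mixed cone and cusp type, apply a Chern-Lu inequality to the log-trace of the reference with respect to $\om_\ep$, and conclude via a maximum principle adapted to the non-compact situation. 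Once these uniform estimates are in place, a compactness plus stability argument identifies a limit of $\om_\ep$ with $\pi^\ast \omke$ (using the uniqueness of the KE metric of $(X, D)$ from \cite{BG}), and the uniform two-sided comparison with the mixed cone and cusp reference passes to the limit, which the isomorphism $\pi|_{\pi^{-1}(\snc)}$ then descends to the statement of Theorem B.

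The main obstacle I expect is the uniform higher-order estimate itself. The set $\pi^{-1}(\snc)$ is non-compact, with boundary of two very different natures: the divisor $\wt D$ (along which we want to recover cone or cusp behavior) and the exceptional locus $\exc(\pi)$ (along which we have no local information on $\om_\ep$, and where the coefficients of $\wt D$ are typically negative). Designing cut-off functions and barrier potentials so that the maximum principle can be run on the right domain -- with the maximum of the relevant log-trace neither attained at $\wt D$ nor escaping to $\exc(\pi)$ -- will be the technical heart of the argument.
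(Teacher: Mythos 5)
Your overall plan — take a log resolution, regularize the problem, obtain uniform estimates, and pass to the limit via uniqueness of the KE metric — matches the paper's strategy, and your treatment of the upper bound (Green's function, exactly as in Theorem A) is in line with what is done. But you stop short at what you correctly call ``the technical heart of the argument'': handling the exceptional locus in the lower bound and in the Laplacian estimate. This is precisely where the paper's key idea lives, and it is not ``cut-off functions'' but a \emph{global} barrier that converts the loss of positivity into a usable term.

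The paper picks an effective $\pi$-exceptional $\R$-divisor $E=\sum c_\alpha E_\alpha$ such that $\pi^*(K_Y+\Delta)-E$ is ample, and sets $\chi:=\sum c_\alpha\log|s_\alpha|^2$. This function tends to $-\infty$ along $E$ and is a potential for $[E]-(\text{K\"ahler form})$ against $\theta$. One then writes the regularized Monge--Amp\`ere equation with the approximate cone metric $\ompe$ built from the \emph{ample} class, and considers $\ve:=\ue-\psie-\chi$. Since $\ue$ is already known to be bounded above and $\chi\to-\infty$ at $E$, the function $\ve$ blows up to $+\infty$ near $E$; after subtracting $\tfrac1m\varphi_P$ (the usual Yau trick for the complete Poincar\'e ends along $D_{lc}$) the minimum is attained at an interior point of $X\setminus(D_{lc}\cup E)$ and the maximum principle yields $\ue\geq C+\chi$ directly — no local cut-offs, no degradation of constants near $\exc(\pi)$. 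The same mechanism makes the Laplacian estimate go through: the quantity $H=\log\tr_\omega\omega'+\Psi_\varepsilon-(C+1)\ve$ tends to $-\infty$ near $E$ because $-(C+1)\ve$ does, so $H+\tfrac1m\varphi_P$ attains an interior maximum, and one concludes $C^{-1}e^{C\chi}\ompe\leq\ompe+\ddc\ve\leq Ce^{-C\chi}\ompe$ on $X\setminus E$. Since $\chi$ is locally bounded on $X\setminus E=\pi^{-1}(\snc)$, this is exactly the quasi-isometry statement of Theorem B.

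Without this barrier $\chi$, the maximum principle cannot be applied: the approximate KE potential is not controlled near $E$, and localizing via cut-offs would reintroduce uncontrolled error terms from $\nabla\chi_{\rm cutoff}$ and $\Delta\chi_{\rm cutoff}$ precisely where you have no information on the metric. So the proposal, as written, has a genuine gap at the step you flagged as hard. One smaller discrepancy: the paper perturbs the class by $t\omega_0$ \emph{and} regularizes the divisor terms by $\varepsilon$ as two independent parameters; your single $\varepsilon\theta$ perturbation conflates the two and would need to be disentangled to quote the smooth-case estimates cleanly.
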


As a corollary of this theorem, we show that the Kähler-Einstein metric of a stable variety (in the sense of Koll\'ar-Sherpherd-Barron and Alexeev)  is cuspidal near the double crossing points, cf Corollary \ref{cor:stable}. \\

Let us conclude this introduction by saying that both Theorem A and Theorem B are the crucial analytic inputs in the proof of the polystability of the logarithmic tangent sheaf of a log canonical pair $(X,D)$ such that $K_X+D$ is ample, cf. \cite{G14}. 

\section{The smooth case}
 
 \subsection{The set-up}

The setting in this paper is the following one: $X$ is a smooth complex projective variety of dimension $n$, $D=\sum a_i D_i$ is a $\R$-divisor with simple normal crossing support with coefficients $a_i\in (-\infty,1]$ such that the adjoint bundle $K_X+D$ is ample (i.e., its Chern class contains a Kähler metric, or equivalently $K_X+D$ is $\Q$-linearly equivalent to a positive $\R$-linear combination of ample $\Q$-line bundles. We stress here that the coefficients of $D$ may be chosen to be negative.
We set $\Dlc:= \sum_{a_i=1} D_i$, and $\Dklt:=D-\Dlc$, and $M:=X \setminus \Supp(\Dlc)$. This notations are borrowed from birational geometry, in the sense that $(X, \Dklt)$ (resp. $(X,D)$ or $(X,\Dlc)$) is a \---log smooth\--- Kawamata log terminal (klt) pair (resp. log canonical (lc) pair). \\

As for endowing $X\setminus \Supp(\Dlc)$ with a natural Kähler-Einstein metric, the viewpoints and definitions vary according to the authors, and we will choose here the following definition which has the advantage to be globally formulated on $X$, and garantees the uniqueness of the metric thanks to the formalism developed in \cite{GZ07} and its companion papers: 

\begin{defi}
With the previous notations, we say that a closed positive current $\om\in c_1(K_X+D)$ on $X$ is a Kähler-Einstein metric for $(X,D)$ if it satisfies: 
\begin{enumerate}
\item The non-pluripolar product $ \om^n$ defines an absolutely continuous measure with respect to some smooth volume form $dV$ on $X$ and $\log(  \om^n/dV) \in L^{1}_{\rm loc}(X)$,
\item $\Ric \om = - \om +[D]$,
\item $\int_X  \om^n  = c_1(K_X+D)^n$.
\end{enumerate}
\end{defi}

This seemingly complicated definition comes from the fact that we know that $\om$ cannot have bounded potentials, hence we have to use the non-pluripolar Monge-Ampère operator \cite{GZ07, BEGZ} in order to define $\om^n$ (and thus $\Ric \om$ which is defined as $-\ddc \log \la \om^n \ra$ as soon as $1.$ is satisfied) and have a suitable formulation of the problem in terms of Monge-Ampère equations. \\

We know from \cite{BG} that there exists a unique such current $\om$; moreover, $\om$ defines a smooth Kähler-Einstein metric on $M$, and if $\theta_{\ep}\in c_1(\Dklt)$ is any smooth approximation of $[\Dklt]$, then $\om$ is the (weak) limit of the twisted Kähler-Einstein metrics $\ome$ satisfying $\Ric \ome = -\ome + \theta_{\ep} + [\Dlc]$. \\

\label{sec:a}
In this part, we assume that the pair $(X,D)$ is log smooth, and we prove Theorem A.
\subsection{The lower bound}
\label{sec:lb}
As we explained in the previous part, it follows from the results of \cite{BG} that it is sufficient to obtain uniform estimates for the potential $\vpe$ solution of
\[(\om+\ddc \vpe)^n = \prod_{a_i<1} (|s_i|^2+\ep^2)^{-a_i} \frac{e^{\vpe} \om^n}{\prod_{a_k=1} |s_k|^2}\]
At that point, it is convenient to work with the complete Poincaré metric $\omp:=\om -  \sum_{a_k=1} \log \log^2 |s_k|^2$ on $M$  (up to scaling the hermitian metrics on $\mathcal O(D_k)$, it defines indeed a smooth complete Kähler metric with bounded geometry on $M$); so we set $\ue:=\vpe +  \sum_{a_k=1} \log \log^2 |s_k|^2$, so that the equation becomes (on $M$)
\begin{equation}
\label{eq1}
(\omp+\ddc \ue)^n = \prod_{a_i<1} (|s_i|^2+\ep^2)^{-a_i} e^{\ue+F} \omp^n
\end{equation}
where $F$ is known to be a bounded smooth function on $M$ (which is even smooth in the quasi-coordinates, cf. \cite{KobR, Tia}.\\

The first step is to introduce the regularized cone metric \cite{Clodo, CGP,G12}. To sum up the construction therein, there exists a smooth $\om$-psh (and $\omp$-psh) potential $\psie$ which is uniformly bounded, and such that the metric $\ompe:= \omp+\ddc \psie$ on $M$ is complete, with bounded bisectional curvature and satisfies
\[\prod_{0<a_i<1} (|s_i|^2+\ep^2)^{-a_i} \omp^n = e^{G_{\ep}} \ompe^n\]
for some smooth function $G_{\ep}$ which is uniformly bounded in $\ep$. Therefore, setting $\ve:=\ue-\psie$, equation \eqref{eq1} becomes
\begin{equation}
\label{eq2}
(\ompe+\ddc \ve)^n = \prod_{a_i<0} (|s_i|^2+\ep^2)^{-a_i}e^{\ve+F_{\ep}} \ompe^n
\end{equation}
where $F_{\ep}=F+G_{\ep}+\psie$, and by the remarks above, $|\Fe| \le C$ for some \textit{uniform} $C>0$.\\

If we apply Yau's minimum principle \cite{Yau2} on the manifold $(M, \ompe)$, we get that $\inf \ve  \ge  - \sup F_{\ep} + \inf \sum_{a_i<0} a_i \log(|s_i|^2+\ep^2) $ and therefore
\begin{equation}
\label{eq3}
\inf_M \ue \ge -C
\end{equation}
for some uniform $C>0$. \\

\subsection{The upper bound I}
Let us get now to the upper bound. We cannot apply the same method here as one sees immediately, so we perform a change of function by setting $\we:=\ve - \sum_{a_i<0} a_i \log  (|s_i|^2+\ep^2)$. As $ \log  (|s_i|^2+\ep^2)$ is $C\om$-psh for some uniform $C$, it is also $C \ompe$-psh (up to changing $C$ eventually), and therefore 
\begin{eqnarray*}
 e^{\we+F_{\ep}} \ompe^n & =& \prod_{a_i<0} (|s_i|^2+\ep^2)^{-a_i}e^{\ve+F_{\ep}} \ompe^n \\ 
& = & \left(\ompe+\sum_{a_i<0} \ddc a_i  \log  (|s_i|^2+\ep^2) +  \ddc \we \right)^n \\
& \le & (C\ompe+  \ddc \we )^n
\end{eqnarray*}
hence the maximum principle yields $\sup \we \le - \inf \Fe + n \log C$, hence
\begin{equation}
\label{eq4}
\ue \le C + \sum_{a_i<0} a_i  \log  (|s_i|^2+\ep^2)
\end{equation}
for some uniform $C>0$. \\

Moreover, we know from \cite{BG} that $\vpe$ converges to $\vp_{\rm KE}$, the potential of the Kähler-Einstein metric of the pair $(X,D)$, which is a quasi-psh function. Hence, by Hartog's Theorem (cf. \cite[Theorem 3.2.12]{Hor2}), we know that there exists $C>0$ such that $\vpe \le C$. As a consequence, $\ue=\vpe+ \sum_{a_k=1} \log \log ^2 \frac{1}{|s_k|^2}$ is locally uniformly bounded above on $X\setminus \Dlc$.
Therefore, if we want to bound $\ue$ from above, we just need to do it locally around points at the intersection of $\Dlc$ and $\sum_{a_i>0}D_i$.

\subsection{The upper bound II}
Now that we have a partial upper bound \eqref{eq4} on $\ue$, one can derive a true upper bound using Green's functions based on ideas appearing in \cite[p. 141]{Wu2}. From now on, one can forget about the cone approximation and just remember the two bounds \eqref{eq3}-\eqref{eq4} satisfied by our potential $\ue$ solution of \eqref{eq1}.

We fix a point $p \in \Dlc$, and one may assume that $p$ admits a neighborhood $\Omega\simeq \DD^n$ where $\Dlc$ is given by $(z_1 \cdots z_r=0)$ and $\sum_{a_i<0}D_i$ by $(z_{r+1}\ldots z_s=0)$ in the holomorphic coordinates $z_1, \ldots, z_n$ induced on $\Omega$ under the identification $\Omega\simeq \DD^n$. Equation \eqref{eq4} can be reformulated as follows 
\begin{equation}
\label{eq:up2}
\ue(z) \le C\Big(1-\sum_{j=r+1}^s \log |z_j|\Big)
\end{equation}
If we knew that $\ue$ were quasi-psh, then we could derive a uniform upper bound from the inequality above and the arguments of \cite{Wu2}. But our function $\ue$ is only $\omp$-psh, so that one cannot apply these arguments unless we have a good knowledge of (local or global) Green's functions for the Poincaré metric. 
In the next section, we will build a global Green's function on $X\setminus \Dlc$ for $\Delta_{\omp}-1$, study its properties, and use it to get the desired upper bound.

But before that, we will give an alternative solution consisting in using the very particular geometry of the Poincaré metric. Indeed, the Poincaré metric behaves in some way like an euclidian one when we pull it back to some appropriate "cover". The right way to formalize this is to use the quasi-coordinates for the Poincaré metric (cf. \cite{KobR, Tia}): they are maps from an open subset $V\subset \mathbb C^n$ to $\DD_r:=(\DD^*)^r \times \DD^{n-r}$ having maximal rank everywhere. So they are just locally invertible, but these maps are not injective in general. \\
To construct such quasi-coordinates on $\DD_r$, we start from the universal covering map $\pi:\mathbb{D}\to\mathbb{D}^*$, given by $\pi(w)=e^{\frac{w+1}{w-1}}$. Formally, it sends $1$ to $0$. The idea is to restrict $\pi$ to some fixed ball $B(0,R)$ with $1/2<R<1$, and compose it (at the source) with a biholomorphism $\Phi_{\eta}$ of $\mathbb{D}$ sending $0$ to $\eta$, where $\eta$ is a real parameter which we will take close to $1$. If one wants to write an explicit formula, we set $\Phi_{\eta}(w)=\frac{w+\eta}{1+\eta w}$, so that the quasi-coordinate maps are given by
$\Psi_{\eta}=(\pi\circ \Phi_{\eta})^r\times \mathrm{Id}_{\mathbb{D}^{n-r}}:V=B(0,R)^r \times \mathbb{D}^{n-r}\to \DD_r$, i.e. $\Psi_{\eta}(v_1, \ldots, v_r, v_{r+1},\ldots, v_n)=(e^{\frac{1+\eta}{1-\eta} \frac{v_1+1}{v_1-1}}, \ldots, e^{\frac{1+\eta}{1-\eta} \frac{v_r+1}{v_r-1}} , v_{r+1},\ldots, v_n)$.\\
Once we have said this, it is easy to see that $\DD_r$ is covered by the images $\Psi_{\eta}(V)$ when $\eta$ goes to $1$. Now, an easy computation shows that $ \Psi_{\eta}^*\, \om_P$ is a Kähler metric on $V\subset \mathbb C^n$ which is uniformly (in $\eta$) quasi-isometric to the euclidian flat metric; moreover all the covariants derivatives of this metric are uniformly bounded with respect to $\eta$, but we will not need this property.\\

Let us go back to our situation. We started from an $\om_P$-psh function $\ue$ satisfying \eqref{eq:up2}. Pulling it back by $\Psi_{\eta}$, we get a smooth function $\uee:=\ue \circ \Psi_{\eta}$ on $V$ which is $\Psi_{\eta}^*\, \om_P$-psh hence (uniformly) quasi-psh by the observation above. Furthermore, as $\Psi_{\eta}$ acts trivially on the component $\DD^{n-r}$, we have 
\begin{equation}
\label{eq:up3}
\uee(v) \le C(1-\sum_{j=r+1}^s \log |v_j|)
\end{equation}
for all $v\in V$ thanks to \eqref{eq:up2}.\\

We are now in position to apply the arguments of \cite{Wu2}, so let us set up a precise framework. For $\rho>0$ large enough ($\rho>2n$ would be sufficient), we have $V\Subset B(0,\rho)$; let us also pick $1/2<R'<R$ and set $V'=B(0,R')^r \times \DD(0,1/2)^{n-r}$. As above, the images of $V'$ by $\Psi_{\eta}$ when $\eta$ goes to $1$ cover $(\DD^*)^r \times \DD(0,1/2)^{n-r}$.
We choose a cut-off function $\chi$ such that $\Supp(\chi) \Subset V$, and $\chi=1$ on $V'$ so that $d(\Supp(\nabla \chi),V')>0$. Finally, we denote by $G: \bar B(0,\rho) \times \bar B(0,\rho) \to [-\infty,0]$ the Green's function of $B(0,\rho)$. If $x\in V'$, we denote by $G_x$ the function $G(x,\cdotp)$. Then for any $x\in V'$, the function $\chi G_x$ satisfies $\Delta(\chi G_x)= \delta_x+G_x\Delta \chi  + \nabla G_x \cdotp \nabla \chi$ (this can be verified locally, first near $x$, and then away from $x$). Therefore, if $dV$ is the Lebesgue measure of $\mathbb C^n$, we have:
\[\int_{B(0,\rho)} \chi G_x \Delta \uee \,dV= \uee(x)+ \int_{\Supp(\nabla \chi)} \uee \left(G_x \Delta \chi +\nabla G_x \cdotp \nabla \chi\right)dV\]

Remember that $\uee$ is quasi-psh, so that $\Delta \uee \ge -C$. As a consequence, 
\[\uee(x)\le C \left( ||G_x||_{L^1}+ \int_{\Supp(\nabla \chi)} \uee\Big[ \left| G_x \Delta \chi\right| +\left|\nabla G_x \cdotp \nabla \chi\right| \Big]dV\right)\]
Of course, $\nabla \chi$ and $\Delta \chi$ are bounded by some constants depending only on $R',R$ and $n$. As for $G_x$ and $\nabla G_x$, these functions are bounded in terms of (negative powers of) $d(x,\cdotp)$, therefore they are uniformly (in $x$) bounded on $\Supp(\nabla \chi)$ by the above observation that $d(\Supp(\nabla \chi),V')>0$. Therefore, we have:

\begin{equation}
\label{eq:up4}
\uee(x) \le C \left( ||G_x||_{L^1} + ||\uee||_{L^1} \right) 
\end{equation}
Applying the Green-Riesz representation formula to the function $y\mapsto |y|^2$, we easily get that $||G_x||_{L^1} = (\rho^2-|x|^2)/2n\le \rho^2$. Moreover, thanks to equation \eqref{eq:up3}, we have a uniform control $||\uee||_{L^1} \le C$ (remember that $\ue$ hence $\uee$ are uniformly bounded from below already). Putting these two estimates together, we infer from \eqref{eq:up4}:
\[\uee(x) \le C\]
for some constant $C$ independent of $x\in V'$, $\eta$ and $\ep$. Pushing this inequality downwards to $(\DD^*)^r \times \DD(0,1/2)^{n-r}$, we obtain 
\[\ue \le C\]
on this latter open set, which ends the proof.

\section{A new global Green's function} 
 \label{se:NewGRE}
 In this section, we investigate the question of the existence of appropriate global Green's functions on the complete Kähler manifold $(X\setminus \Dlc, \omp)$. Adapting the arguments of \cite{SY, LiTam87}, we will construct on this Riemannian manifold a positive Green's function for the operator $\Delta -1$ whose behavior is well understood at infinity. This will enable us to give an alternative proof of Theorem A, cf. \S \ref{alt}.
 
 \subsection{Existence of the Green's function}
Let $(M^m, g)$ be a complete Riemannian manifold of real dimension $m$, and $\Delta_g$ be the Laplacian of $g$.
Similar to \cite[p. 81]{SY}, a function $G$ defined on $M \times M \setminus \textup{diag}(M \times M)$ is called a \emph{global positive Green's function} for $\Delta_g - 1$ on $M$ if $G$ satisfies the following properties:
\begin{enumerate}[\upshape (i)] 
  \item \label{it:ggre1} For any fixed $x \in M$,  $(\Delta_{g(y)} - 1)G(x,y) = 0$ and $G(x,y) > 0$, for all $y \in M$, $y \ne x$;
  \item \label{it:ggre2} $G(x,y) = G(y,x)$; 
  \item  \label{it:ggre3} As $y \to x$ for fixed $x$, $G(x,y) = 
     [(m - 2) \sigma_{m-1}]^{-1} \textup{dist}(x,y)^{2-m} \big(1 + o(1) \big)$.\\
\end{enumerate}

Here $\textup{dist}(x,y)$ denotes the geodesic distance between $x$ and $y$ in $M$ and $\sigma_{m-1}$ is the volume of the unit $(m-1)$-sphere in $\mathbb{R}^m$. 

\noindent
The following lemma constructs on \emph{any} complete Riemannian manifold a global positive Green's function for $\Delta_g - 1$. This is in sharp contrast to the global positive Green's function for $\Delta_g$ (cf. \cite{LiTam87}).
\begin{lemm} \label{le:ggre}
Let $(M, g)$ a complete Riemannian manifold. 
Then $(M, g)$ admits a global positive Green's function $G$ for $\Delta_g - 1$ on $M$. Furthermore, for any $x \in M$ and any compact set $B$ containing $x$, $G (x, y) \le C_B$ for all $y \in M\setminus B$, where the constant $C_B>0$ depending only on $B$.
\end{lemm}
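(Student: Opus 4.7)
My plan is to use an exhaustion argument combined with the sub-Markov heat-kernel formula. Let $M = \bigcup_k \Omega_k$ be an exhaustion by smooth relatively compact domains with $\Omega_k \Subset \Omega_{k+1}$. Since $-\Delta_g + 1$ is strictly coercive on $H^1_0$, classical elliptic theory furnishes a unique Dirichlet Green's function $G_k$ on each $\Omega_k$ solving $(-\Delta_g + 1) G_k(x, \cdot) = \delta_x$ with zero boundary values. Positivity of $G_k$ and symmetry $G_k(x,y) = G_k(y,x)$ come from the positivity-preserving, self-adjoint resolvent $(-\Delta_g + 1)^{-1}$, while the Euclidean singularity (iii) at the pole follows from the standard parametrix (the zeroth-order term is negligible compared to $\Delta_g$ at small scales, so the leading asymptotic matches that of the Green's function of $-\Delta_g$). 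Monotonicity $G_k \le G_{k+1}$ comes from the maximum principle applied to $G_{k+1} - G_k$, which extends continuously across $x$ (identical leading singularities), satisfies $(\Delta_g - 1)(G_{k+1} - G_k) = 0$ on $\Omega_k \setminus \{x\}$, and is $\ge 0$ on $\partial \Omega_k$; a negative interior minimum $p$ would give $\Delta_g(G_{k+1}-G_k)(p) \ge 0$ and $(G_{k+1}-G_k)(p) < 0$, forcing $(\Delta_g - 1)(G_{k+1}-G_k)(p) > 0$, a contradiction.

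To extract a finite pointwise limit off the diagonal and control $G_k$ uniformly in $k$, the key input is the identity $G_k(x,y) = \int_0^\infty e^{-t} H_k^D(x, y, t)\, dt$, with $H_k^D$ the Dirichlet heat kernel on $\Omega_k$. The sub-Markov property $\int_{\Omega_k} H_k^D(x, \cdot, t)\, dy \le 1$ together with Fubini yields the uniform bound $\|G_k(x, \cdot)\|_{L^1(M)} \le 1$. Since $G_k \ge 0$ satisfies $\Delta_g G_k = G_k \ge 0$ on $\Omega_k \setminus \{x\}$, it is a non-negative subsolution of a uniformly elliptic equation, so Moser's mean-value inequality gives $\sup_{B(y, r/2)} G_k \le C(y, r) \|G_k\|_{L^1(B(y, r))}$ on any geodesic ball $B(y, r) \subset M \setminus \{x\}$, uniformly in $k$. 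Combined with monotonicity, $G_k \nearrow G$ locally uniformly on $M \setminus \{x\}$; the limit $G$ is smooth there by elliptic regularity and inherits properties (i)--(iii).

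For the quantitative upper bound outside a compact set $B \ni x$, choose $\rho > 0$ with $\bar B(x, \rho) \subset B$ and set $C_B := \sup_{\partial B(x, \rho)} G$, which is finite since $\partial B(x, \rho)$ is a compact subset of $M \setminus \{x\}$. On $\Omega_k \setminus \bar B(x, \rho)$ (for $k$ large), $(\Delta_g - 1) G_k = 0$, $G_k \le C_B$ on $\partial B(x, \rho)$ (by monotonicity $G_k \le G$), and $G_k = 0 \le C_B$ on $\partial \Omega_k$; any interior point with $G_k > C_B > 0$ would be a positive interior maximum for a solution of $(\Delta_g - 1) u = 0$, which is ruled out by the maximum principle recalled above. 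Hence $G_k \le C_B$ on $\Omega_k \setminus B(x, \rho)$, and letting $k \to \infty$ yields $G \le C_B$ on $M \setminus B$. The principal subtlety is the absence of any a priori decay of $G$ at infinity on a general complete $M$; what lets us bypass this is precisely the presence of the zeroth-order term $-1$, which through the exponential weight $e^{-t}$ in the heat-semigroup formula produces integrability of $G$ regardless of the large-scale geometry—an essential contrast with the classical case of $\Delta_g$ alone, where the existence of a global positive Green's function is a non-trivial condition on $M$.
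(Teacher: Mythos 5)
Your proof is correct and reaches the same conclusion as the paper, but by a genuinely different route. Where the paper, following Schoen--Yau, proves uniform boundedness of $m_i := \sup_{y\in \partial B_r(x)} G_i(x,y)$ by a normalize-and-contradict argument (set $v_i = G_i/m_i$, extract a limit $v$ with $0\le v\le 1$ solving $(\Delta_g - 1)v = 0$, and contradict the maximum principle at a positive interior maximum), you obtain the uniform integral bound $\|G_k(x,\cdot)\|_{L^1(\Omega_k)}\le 1$ directly from the resolvent identity $G_k=\int_0^\infty e^{-t}H^D_k\,dt$ and the sub-Markov property of the Dirichlet heat semigroup, and then upgrade to local $L^\infty$ bounds via Moser's mean-value inequality applied to the subharmonic functions $G_k$. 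The paper's route is more elementary (no heat-kernel theory, no Moser iteration); yours is more quantitative, since the explicit bound $\|G(x,\cdot)\|_{L^1(M)}\le 1$ holds on \emph{any} complete $(M,g)$, irrespective of volume, which in particular makes the integrability asserted in Corollary~\ref{co:ggre} automatic without invoking the finite-volume hypothesis. Your argument also isolates conceptually why the $-1$ term is decisive: it produces the damping factor $e^{-t}$ that makes the time integral converge, in contrast with the delicate existence problem for a positive Green's function of $\Delta_g$ alone. One small point you passed over quickly: to see that the limit $G$ inherits property~(iii), note that $w_k := G_k - \Gamma$ (with $\Gamma$ a local parametrix at $x$) solves a fixed elliptic equation on a full ball around $x$ and is uniformly bounded in $L^1$ there by your heat-kernel estimate and the local integrability of $\Gamma$; interior elliptic estimates then give uniform $C^0$ control, so $w=G-\Gamma$ extends continuously across $x$ and the Euclidean singularity is preserved in the limit. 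This is a routine supplement and does not affect the correctness of your argument.
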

\begin{proof}
The existence of $G$ follows from almost the same argument for Theorem A.1 in Schoen-Yau~\cite[p. 82]{SY} (see also Li-Tam~\cite{LiTam87}), using the monotone increasing sequence of positive Dirichlet Green's function $\{G_i\}$ on the exhaustion $\{\Omega_i\}$. The only difference is that here the operator $\Delta_g - 1$ allows us to compare $G_i$ with the constant function. 

More precisely, let $\{\Omega_i\}$ be an exhaustion of $M$ and $G_i$ be the positive Dirichlet Green's function on $\Omega_i$ (cf. \cite[p. 157]{Duf} or Remark~\ref{re:ggre}). Fix an arbitrary $x \in M$. We need to show that the monotonic sequence
\[
   m_i = \sup_{y \in \p B_r(x)} G_i(x,y)
\]
is bounded from above for all $r>0$. Suppose the contrary, i.e., there exists an $r>0$ such that $m_i \to +\infty$. Let
\[
   v_i (y) = \frac{1}{m_i} G_i(x,y) \quad \textup{for all $i \ge 1$}.
\]
By the maximum principle
\[
    v_i \le 1 \quad \textup{on $\Omega_i \setminus B_r(x)$}.
\]
On the other hand, for any $\varepsilon>0$, by property \eqref{it:ggre3} of $G$ we have
\[
   v_i (y) \le \varepsilon G_1(x,y) + 1 \quad \textup{on $\overline{B}_r(x) \setminus \{x\}$}
\]
for all sufficiently large $i$ such that $1/m_i < \varepsilon$. Applying the diagonal process we obtain that a subsequence of $v_i$ converges uniformly on compact subsets of $M \setminus \{x\}$ to a function $v$ in $M \setminus \{x\}$ satisfying
\[
    (\Delta_g - 1) v = 0 \quad \textup{in $M \setminus \{x\}$}
\]
and $0 \le v \le 1$ on $M \setminus B_r(x)$ and $v(y) \le \varepsilon G(x, y) +1$ on $\overline{B}_r(x) \setminus \{x\}$. Letting $\varepsilon \to 0$ yields
\[
   0 \le v \le 1 \quad \textup{in $M \setminus \{x\}$}.
\]
Since $\max_{\p B_r(x)} v_i = 1$ for all $i$, the function $v$ attains its maximum value 1 at an interior point of $M \setminus \{x\}$. 
Applying the maximum principle to $(\Delta_g - 1) v = 0$ at the interior point yields $v \le 0$ on $M \setminus\{x\}$, which contradicts $\max v = 1$.
  Hence, the sequence $\{m_i\}$ is bounded from above for all $r>0$. Then as in Schoen-Yau~\cite[p. 83]{SY} we apply the diagonal process to obtain a global positive Green's function $G$ on $M$.

For the second statement, for a given $x \in M$, assume that $x \in B \subset \subset\Omega_{i_0}$ for some $i_0\ge 1$. By the previous step
\[
   \sup_{i \ge i_0} \sup_{y \in \p B} G_i(x,y) \le C 
\]
where $C>0$ depends only on $B$. Applying the maximum principle yields
\[
   G_i(x,y) \le C  \quad \textup{on $M \setminus B$}.
\]
By the proof of the first statement~\cite[p. 83]{SY}, a subsequence of $G_i$ converges uniformly on compact subsets to $G$; hence, 
\[
   G(x,y) \le C \quad \textup{on $M \setminus B$}.
\] 
\end{proof}

\begin{rema} \label{re:ggre}
The proof of Lemma~\ref{le:ggre} 
makes use of  a classical fact that for a bounded domain $\Omega$ with smooth boundary in a Riemannian manifold, there exists a positive Dirichlet Green's function $G$ satisfies the properties \eqref{it:ggre1}--\eqref{it:ggre3}. 
This fact can be proved as follows:
By Duff~\cite[p. 104, 5.3]{Duf} one obtains a local fundamental solution $\gamma(P, Q)$ for operator $\Delta - 1$ on a sufficiently small neighborhood $\mathcal{U}$ of $\textup{diag}(\Omega \times \Omega)$. That is,  given $Q \in \Omega$, $\gamma(P, Q)$ is smooth and satisfies 
\[
    (\Delta - 1) \gamma(P, Q) = 0
\]
for any $P$ near $Q$ and $P \ne Q$. Furthermore, 
\[
   \gamma(P, Q) \sim \frac{1}{(m-2) \sigma_{m-1}} \textup{dist}(P, Q)^{2-m}, \quad \textup{as $\textup{dist}(P,Q) \to 0$}.
\]
Let
\[
   \Gamma(P, Q) = \eta(\textup{dist}(P,Q)/\epsilon) \gamma(P, Q),
\]
where $\eta = \eta(t)$ satisfies that $\eta\in C^{\infty}(\mathbb{R})$, $0 \le \eta \le 1$, $\eta \equiv 1$ for $0 \le t \le 1/2$ and $\eta \equiv 0$ for $t \ge 1$, and $\epsilon>0$ is a small constant such that the compact support of $\eta$ is contained in the neighborhood $\mathcal{U}$ of the diagonal. 
Fix an arbitrary $Q\in \Omega$. It follows that
\[
   (\Delta - 1) \Gamma(\cdot, Q) = -\delta_Q + F,
\]
where $F \equiv 2 \nabla \eta \cdot \nabla \gamma(\cdot, Q) + \gamma(\cdot, Q) \Delta \eta \in C^{\infty}_c(\Omega)$. We can solve 
\[
   (\Delta - 1) w   = - F \quad \textup{in $\Omega$}, \quad
   w = 0 \quad \textup{on $\p \Omega$}
\]
for a smooth function $w(\cdot, Q)$ on $\Omega$.
Then 
\[
  G(P, Q) = \Gamma(P, Q) + w(P, Q)
\]
is the desired Green's function. That $G(P, Q) = G(Q, P)$ is proven in Duff~\cite[p. 158]{Duf}. \qed
\end{rema}

From now on we let $M = X \setminus D_{lc}$. Then $(M, \omega)$ is a complete K\"ahler manifold of finite volume.
\begin{coro}
\label{co:ggre}
Let us endow $M := X \setminus D_{lc}$ with the metric $\omega$. Then $M$ admits a global positive Green's function $G$ which in particular belongs to $L^1(M)$. Furthermore, for any $x \in M$ and any $r>0$, $G(x,y)$ as a function of $y \in M$ satisfies
\[
   \|G(x,y)\|_{C^{k,\alpha}(M \setminus B_r(x))} \le C(k,\alpha) \sup_{y \in \p B_r} G(x,y) 
\]
where $C^{k,\alpha}$, $k \ge 0$, $0 < \alpha <1$, is the H\"older space in the sense of Cheng-Yau, $C(k,\alpha)>0$ is a constant depending only on $k$ and $\alpha$.
\end{coro}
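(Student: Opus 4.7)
The existence part is essentially a direct application of Lemma~\ref{le:ggre} to the complete Kähler manifold $(M, \omp)$: since $\omp$ is a Poincaré-type metric on the compact base $X$, it is complete, so the lemma produces a global positive Green's function $G$ for $\Delta_{\omp} - 1$ on $M$, satisfying all three defining properties (i)--(iii). The only thing to remember is that the real dimension is $m = 2n$, so the diagonal asymptotic is $G(x,y) \sim [(2n-2)\sigma_{2n-1}]^{-1} \dist_\omp(x,y)^{2-2n}$ as $y \to x$.

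For the $L^1$ statement, the plan is to split $M$ into a small geodesic ball $B_\rho(x)$ and its complement. On $B_\rho(x)$, the asymptotic behavior in (iii) shows $G(x,\cdot)$ is comparable to $\dist(x,\cdot)^{2-2n}$, which is integrable against the volume form in real dimension $2n$. On $M \setminus B_\rho(x)$, Lemma~\ref{le:ggre} (second statement, applied with $B = \overline{B_\rho(x)}$) guarantees that $G(x,\cdot)$ is uniformly bounded; since the Poincaré-type metric $\omp$ has finite total volume on $M$ (this is a standard computation: its volume form is comparable to $\prod_i |s_i|^{-2}(\log |s_i|^2)^{-2}$ times a smooth volume form, which is integrable), this bounded part contributes a finite integral as well. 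Summing the two contributions yields $G(x,\cdot) \in L^1(M)$.

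The Hölder estimate is a consequence of two ingredients. First, the strong maximum principle applied to the equation $(\Delta_\omp - 1) G(x, \cdot) = 0$ on $M \setminus \overline{B_r(x)}$: at an interior maximum $y_0$ of $G(x,\cdot)$, one would have $\Delta_\omp G(x,y_0) \le 0$ while $G(x, y_0) > 0$, contradicting the equation. Since moreover $G(x,\cdot)$ is bounded on $M \setminus B_r(x)$ by Lemma~\ref{le:ggre}, the supremum is attained on the boundary $\partial B_r(x)$. Second, interior Schauder estimates: the Poincaré-type Kähler metric $\omp$ has bounded geometry in the sense of Cheng-Yau (uniformly bounded curvature and injectivity radius bounded below, as readily seen in the quasi-coordinates of \cite{KobR, Tia}), so the Schauder estimates for the elliptic operator $\Delta_\omp - 1$ hold with uniform constants in the $C^{k,\alpha}_{qc}$ norm. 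Applying them on a slightly enlarged region and combining with the sup bound from the maximum principle yields the claimed inequality.

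The main subtlety I expect is making the constant in the Hölder estimate depend only on $k, \alpha$ (and $r$, implicitly); this forces one to work in the quasi-coordinates framework, where the Laplacian $\Delta_\omp$ and the Hölder norm take the shape of a uniformly elliptic operator and a standard Hölder norm on fixed euclidean balls, so that usual interior Schauder theory in $\R^{2n}$ applies with constants independent of the base point. This uniformity is exactly the content of the $\mathscr{C}^{k,\alpha}_{qc}$ formalism and is the standard device for such estimates on Poincaré-type complete manifolds.
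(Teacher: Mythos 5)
Your proof is correct and follows essentially the same route as the paper: existence from Lemma~\ref{le:ggre}, $L^1$ from the local singularity in property (iii) plus the uniform bound away from $x$ plus finiteness of the Poincaré volume, and the Hölder bound from interior Schauder estimates in the quasi-coordinates. The only place to tighten is the maximum-principle step: on the non-compact set $M\setminus B_r(x)$, "no interior maximum plus boundedness" does not by itself imply that the supremum is attained on $\partial B_r(x)$ (a bounded solution could approach its supremum at infinity without attaining it). The clean fix is the one the paper's proof of Lemma~\ref{le:ggre} already uses and which Corollary~\ref{co:ggre} leans on implicitly: apply the classical maximum principle to the Dirichlet Green's functions $G_i$ on the compact sets $\overline{\Omega_i}\setminus B_r(x)$, where $G_i$ vanishes on $\partial\Omega_i$ so that $\sup_{\Omega_i\setminus B_r}G_i \le \sup_{\partial B_r}G_i$, then let $i\to\infty$ using $G_i\nearrow G$; the Schauder estimate is likewise applied to the $G_i$ and passed to the limit by the diagonal process. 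This is a rephrasing rather than a different method, so the overall argument stands.
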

\begin{proof}
That $G$ is in $L^1(M)$ follows immediately from Lemma~\ref{le:ggre} and the finiteness of volume. To see the estimate,
 note that $(M, \omega)$ has bounded geometry in terms of the quasi-coordinates. In constructing $G$ we apply the Schauder interior estimates to the Cheng-Yau's H\"older spaces, and then, using a diagonal process we can pass from $G_i$ to $G$.
\end{proof}

\subsection{Properties of the Green's function}
The following result is a slight variant of \cite[Lemma 2 p. 138]{Wu2}.
\begin{lemm}
\label{le:IBP}
Let $M = X \setminus D_{lc}$ with metric $\omega$.
Given $x \in M$ and a small ball $B \equiv B_r(x) \subset M$. For any $f, h \in C^{k,\alpha}(M \setminus B)$, $k \ge 2$, $0<\alpha < 1$, 
\[
     \int_{M \setminus B} \textup{div} (f \nabla h)  = - \int_{\p B} f \frac{\p h}{\p \nu}
\]
where the divergence $\textup{div}$ and gradient $\nabla$ are both with respect to $\omega$, and $\p B$ is oriented according to the outer unit normal $\nu$.
\end{lemm}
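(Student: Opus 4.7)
The plan is to reduce the statement to the classical divergence theorem on a smooth compact manifold with boundary by exhausting $M \setminus B$ from inside, and then showing that the boundary term coming from the ``infinity'' of $M$ (i.e., near $\Dlc$) vanishes in the limit. The heart of the matter is that although $M \setminus B$ is noncompact, both the Cheng--Yau $C^{k,\alpha}$ control on $f,h$ and the finite volume / shrinking boundary-area properties of the Poincar\'e-type metric $\omega$ conspire to kill all asymptotic contributions.

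\textbf{Step 1: exhaustion.} Let $\{s_i\}_{a_i=1}$ be defining sections of the components of $\Dlc$, and for $\delta>0$ set
\[
\Omega_{\delta} \;=\; \{\, p \in M : |s_i(p)| \geq \delta \text{ for every } i \text{ with } a_i=1\,\}.
\]
For $\delta$ sufficiently small, $B \Subset \Omega_{\delta}$, $\Omega_{\delta}$ is a compact manifold with piecewise smooth boundary, and $\Omega_{\delta} \nearrow M$ as $\delta \to 0$. Applying the classical divergence theorem on $\Omega_{\delta}\setminus B$ yields
\[
\int_{\Omega_{\delta}\setminus B} \dive(f\nabla h)\, dV_{\omega} \;=\; -\int_{\p B} f\, \frac{\p h}{\p \nu}\, d\sigma \;+\; \int_{\p \Omega_{\delta}} f\,\frac{\p h}{\p \nu'}\, d\sigma,
\]
where $\nu'$ is the outer unit normal to $\Omega_{\delta}$ (pointing into a tubular neighborhood of $\Dlc$).

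\textbf{Step 2: passage to the limit in the bulk integral.} The Cheng--Yau H\"older spaces are defined via the quasi-coordinates of $\omega$, so $f, h \in \cka(M\setminus B)$ with $k\geq 2$ implies that $|f|$, $|\nabla f|_{\omega}$, $|\nabla h|_{\omega}$ and $|\Delta_{\omega} h|$ are uniformly bounded on $M\setminus B$. Hence $\dive(f\nabla h) = \nabla f \cdot \nabla h + f \Delta_{\omega} h$ is bounded, and since $(M,\omega)$ has finite volume it belongs to $L^1(M\setminus B)$. Dominated convergence then gives
\[
\int_{\Omega_{\delta}\setminus B} \dive(f\nabla h)\, dV_{\omega} \;\xrightarrow[\delta\to 0]{}\; \int_{M\setminus B} \dive(f\nabla h)\, dV_{\omega}.
\]

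\textbf{Step 3: vanishing of the boundary term at infinity.} This is the main obstacle. By Step 2 the integrand on $\p\Omega_{\delta}$ satisfies $|f\,\p h/\p \nu'| \leq |f|_{\infty}\,|\nabla h|_{\omega} \leq C$, so it suffices to show that the $\omega$-area of $\p\Omega_{\delta}$ tends to $0$. This is a local computation near $\Dlc$: in a polydisc chart in which $\Dlc = \{z_1\cdots z_r=0\}$, the Poincar\'e-type metric $\omega$ is uniformly quasi-isometric to $\sum_{j\leq r} \frac{i\,dz_j\wedge d\bar z_j}{|z_j|^2 \log^2|z_j|^2} + \sum_{j>r} i\,dz_j\wedge d\bar z_j$, and the piece of $\p\Omega_{\delta}$ of the form $\{|z_i|=\delta\}$ inherits an induced length in the $z_i$-circle direction of order $1/|\log \delta|$. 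Fubini then gives $\mathrm{Area}_{\omega}\bigl(\p \Omega_{\delta}\bigr) = O\bigl(1/|\log \delta|\bigr)$, so
\[
\int_{\p \Omega_{\delta}} f\,\frac{\p h}{\p \nu'}\, d\sigma \;\xrightarrow[\delta\to 0]{}\; 0.
\]
Combining Steps 1--3 yields the desired identity. The subtle point is Step~3, and it is exactly where the combination of the Cheng--Yau regularity of $f,h$ and the cuspidal geometry of $\omega$ is used.
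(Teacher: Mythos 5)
Your proof is correct, and it reaches the conclusion by a route related to but technically distinct from the paper's. The paper introduces cutoff functions $\chi_m(\rho)$ built from $\rho=\sum\log(-\log|s_i|^2)$, writes $\dive(f\nabla h)=\dive(\chi_m f\nabla h)+\dive((1-\chi_m)f\nabla h)$, applies the usual Stokes theorem to the compactly supported second piece (which produces the $\p B$ boundary term), and then kills the first piece by dominated convergence, using that $\dive(f\nabla h)\in L^1(M)$ and that $\nabla\chi_m$ is bounded and supported on an annulus of shrinking volume. You instead exhaust $M\setminus B$ by the compact sets $\Omega_\delta\setminus B$, invoke the divergence theorem there, and show directly that the extra boundary contribution on $\p\Omega_\delta$ vanishes as $\delta\to 0$ because the integrand is uniformly bounded (Cheng--Yau control) while the $\omega$-area of $\p\Omega_\delta$ is $O(1/|\log\delta|)$. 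Both arguments rest on the same structural facts — the quasi-coordinate regularity of $f,h$ and the cuspidal decay of the Poincar\'e metric — and both are sound. The paper's version avoids computing the boundary area at the price of a cutoff; yours makes the geometric collapse of $\p\Omega_\delta$ explicit and quantitative, which is arguably more transparent. One small point worth making explicit: $\Omega_\delta$ has corners along the strata where several components of $\Dlc$ meet, so the divergence theorem you invoke is the version for manifolds with corners (or one should first smooth the corners, as the area estimate is insensitive to this); this is routine but should be said.
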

\begin{proof}
As in \cite[p. 138]{Wu2} we use the cutoff function $\chi_m(\rho)$ on $M$ such that $\chi_m \equiv 0$ for $\rho \le m$, $0 \le \chi_m \le 1$ for $m \le \rho \le m +1$, and $\chi_m \equiv 1$ for $\rho \ge m+1$.
Here
\[
   \rho = \sum_{i=1}^k \log (-\log |s_i|^2) \to +\infty \quad \textup{as $x \to D_{lc}$},
\]
and  $m \ge 1$ such that $B = B_r(x) \subset \subset \{\rho < m\}$. 
Write
\[
   \int_{M \setminus B} \textup{div} (f \nabla h) = \int_{M \setminus B} \textup{div} (\chi_m f \nabla h) + \int_{M \setminus B} \textup{div} [(1 - \chi_m) f \nabla h].
\]
Note that $1 - \chi_m(\rho)$ has compact support $\{\rho \le m+1\}$ in $M$ and $1 - \chi_m \equiv 1$ on $\overline{B}$. Applying the usual Stokes' theorem yields
\[
    \int_{M \setminus B} \textup{div} [ (1 - \chi_m) f \nabla h] = - \int_{\p B} (1 - \chi_m) f \nabla h = - \int_{\p B} (1 - \chi_m) f \nabla h.
\]
Then following \cite[p. 138]{Wu2} we obtain
\[
   \lim_{m \to +\infty} \int_{M \setminus B} \textup{div} (\chi_m f \nabla h) = 0
\]
by using Lebesgue's dominated convergence theorem.
\end{proof}

\begin{coro}
\label{co:IBP}
Let us endow $M = X \setminus D_{lc}$ with the metric $\omega$, and let $G$ be the global positive Green's function obtained in Corollary~\ref{co:ggre}. For any $x \in M$ and any $f \in C^{k,\alpha}(M)$ with $k \ge 2$, 
\[
   f(x) = - \int_M G (x,y) (\Delta_{\omega(y)} - 1) f(y) \omega^n. 
\]
\end{coro}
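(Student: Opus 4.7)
The plan is to run the classical Green's representation argument, using Lemma~\ref{le:IBP} as the substitute for the usual Stokes theorem (which fails because $M$ is non-compact of finite volume) and the precise near-diagonal asymptotic \eqref{it:ggre3} of $G$ to extract $f(x)$ from a boundary term. Fix $x\in M$, pick $\epsilon>0$ small enough that $B:=B_\epsilon(x)\Subset M$, and set $h(y):=G(x,y)$. By Corollary~\ref{co:ggre} we have $h\in C^{k,\alpha}(M\setminus B)$, and $f\in C^{k,\alpha}(M)$ by hypothesis, so Lemma~\ref{le:IBP} applies to both $(f,h)$ and $(h,f)$. Subtracting the two identities gives Green's second identity
\[
\int_{M\setminus B}\bigl(f\,\Delta_{\om} h-h\,\Delta_\om f\bigr)\,\om^n
 \;=\;\int_{\partial B}\Bigl(h\,\frac{\partial f}{\partial\nu}-f\,\frac{\partial h}{\partial\nu}\Bigr).
\]
Since $(\Delta_\om-1)h=0$ on $M\setminus\{x\}$ by property~\eqref{it:ggre1}, this rewrites as
\[
-\int_{M\setminus B} G(x,y)\,(\Delta_\om-1)f(y)\,\om^n
 \;=\;\int_{\partial B}\Bigl(h\,\frac{\partial f}{\partial\nu}-f\,\frac{\partial h}{\partial\nu}\Bigr).
\]

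The next step is to let $\epsilon\to 0$. For the boundary contributions, I would use property~\eqref{it:ggre3}: on $\partial B_\epsilon(x)$ one has $h\sim[(m-2)\sigma_{m-1}]^{-1}\epsilon^{2-m}$ and $\partial h/\partial\nu\sim -\sigma_{m-1}^{-1}\epsilon^{1-m}$, with $m=2n$. Combined with $|\partial B_\epsilon|\sim\sigma_{m-1}\epsilon^{m-1}$ and the continuity of $f$ and $\nabla f$ at $x$, this yields
\[
\int_{\partial B_\epsilon}h\,\tfrac{\partial f}{\partial\nu}=O(\epsilon),\qquad
\int_{\partial B_\epsilon}f\,\tfrac{\partial h}{\partial\nu}\longrightarrow -f(x),
\]
exactly as in the Euclidean computation.

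Finally I have to pass to the limit on the left-hand side. Here $G(x,\cdot)\in L^1(M)$ by Corollary~\ref{co:ggre}, and $(\Delta_\om-1)f\in L^\infty(M)$ since $f\in C^{k,\alpha}(M)$ in the Cheng--Yau sense (so $\Delta_\om f$ is bounded on $M$ thanks to the bounded geometry of $\omp$ in quasi-coordinates). Dominated convergence then gives
\[
\int_{M\setminus B_\epsilon(x)}G(x,y)(\Delta_\om-1)f(y)\,\om^n \;\longrightarrow\;\int_M G(x,y)(\Delta_\om-1)f(y)\,\om^n,
\]
and combining the three limits yields the claimed identity.

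I do not expect any real obstacle: the only point requiring a little care is making sure Lemma~\ref{le:IBP} is applicable, i.e.\ that $h=G(x,\cdot)$ is in the Cheng--Yau H\"older space on $M\setminus B$ (granted by Corollary~\ref{co:ggre}) and that its behavior at $\Dlc$ is tame enough for the cut-off argument of Lemma~\ref{le:IBP} to go through; this is automatic from the uniform bound $G(x,y)\le C_B$ on $M\setminus B$ supplied by Lemma~\ref{le:ggre}.
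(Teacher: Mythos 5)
Your proof is correct and follows essentially the same route as the paper: apply Lemma~\ref{le:IBP} to obtain Green's second identity on $M\setminus B_\epsilon(x)$, use property~\eqref{it:ggre1} to kill the bulk term, use property~\eqref{it:ggre3} to identify the boundary-term limit as $f(x)$, and pass to the limit on the left via dominated convergence from $G(x,\cdot)\in L^1(M)$. You merely spell out the boundary asymptotics and the $(\Delta_\om-1)f\in L^\infty$ point more explicitly than the paper does, which are harmless elaborations of the same argument.
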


\begin{proof}
Fix an arbitrary $\varepsilon>0$. By Lemma~\ref{le:IBP},
\begin{align*}
   & \int_{M \setminus B_{\varepsilon}(x)} G(x, \cdot) (\Delta - 1) f \\
   & = \int_{M \setminus B_{\varepsilon}(x)} f (\Delta - 1) G - \int_{\p B_{\varepsilon}(x)} G \frac{\p f}{\p \nu} + \int_{\p B_{\varepsilon}(x)} f \frac{\p G}{\p \nu} \\
   & = - f(x),
\end{align*}
in view of properties \eqref{it:ggre1} and \eqref{it:ggre3} of $G$.
  Since $G(x,\cdot) \in L^1(M)$, by Lebesgue's dominated convergence theorem
  \[
      \int_M G(x,\cdot) (\Delta - 1) f = \lim_{\varepsilon \to 0^+} \int_{M \setminus B_{\varepsilon}(x)} G(x,\cdot) (\Delta - 1) f = - f(x).
  \]
\end{proof}

\subsection{An alternative proof of Theorem A}
\label{alt}
We borrow the notations of \S \ref{sec:a}: let $X$ be a compact Kähler manifold $X$ endowed with a snc divisor $\Dlc$, let us set $M:=X\setminus \Dlc$, and let us consider the Monge-Ampère equation on $M$:
\[(\om+\ddc u)^n = \frac{e^{u+F}\om^n}{\prod_{i} |s_i|^{2a_i}}\]
where $\Dklt=\sum a_i (s_i=0)$ is a divisor with snc support whose coefficients $a_i$ belong to $(-\infty, 1)$, and $\om$ is a metric with Poincaré-type singularities along $\Dlc$, the latter divisor being also assumed to have normal crossings with $\Supp(\Dklt)$. Finally, $F$ is a smooth function when read on the quasi-coordinate, i.e. 
$F \in C^{k,\alpha}(M)$ for all $k \ge 2$ and $0 < \alpha <1$.
 
 \noindent
We will assume part of the results of \S \ref{sec:a}, namely that $u$ is bounded below. We claim that by Corollary~\ref{co:ggre} and Corollary~\ref{co:IBP}, we can derive a true upper bound for $u$.\\

Indeed, let us perturb (as in \S \ref{sec:a}) the above Monge-Ampère equation by the following one, for $\varepsilon>0$,
  \[    (\omega + dd^c u_{\varepsilon})^n = e^{u_{\varepsilon} + F + f_{\varepsilon}} \omega^n \]
where  $ f_{\varepsilon} = - \sum_i a_i \log (|s_i|^2 + \varepsilon^2)$. We know that the latter equation has a unique solution $u_{\varepsilon} \in C^{k,\alpha}(M)$ for all $k \ge 2$ and $0 < \alpha <1$, and that $\ue$ converges to $u$ weakly on $X$, and smoothly on the compact sets of $M\setminus \Dklt$.

\noindent
Applying the inequality $e^t \ge 1 + t$ yields
\[   (\Delta_{\omega} - 1) u_{\varepsilon} \ge F + f_{\varepsilon}\]
Multiplying this inequality by $G(x,y)$ gives
\begin{eqnarray*}
   u_{\varepsilon}(x) 
   & =& - \int_M G(x,y) (\Delta_{\omega} - 1) u_{\varepsilon}(y) \omega^n(y) \\
   & \le & - \int_M G(x,y) (F + f_{\varepsilon}) (y) \omega^n(y) \\
   &\le & C
\end{eqnarray*}
in view of the fact that 
\[      \int_{M \setminus B} G(x,y) (- \log |s_i|^2)(y) \omega(y)^n \le C_B \int_{M \setminus B} (-\log |s_i|^2)(y) \omega(y)^n \le C \]
for each $i$.
Here $B$ is a compact set in $M$ containing $x$, and $C>0$ is a constant depending only on $n$, $a_i$ and $C_B$.

\section{The case of singular pairs}
\label{sec:sing}
The goal of this section is to explain and prove Theorem B about the behavior near the boundary divisor of the Kähler-Einstein metric associated with a log canonical pair $(X,D)$ such that $K_X+D$ is ample.

\subsection{Mixed cone and cusp singularities}

Let $(X,D)$ be a pair consisting in a complex manifold $X$ and a $\R$-divisor $D$ having simple normal crossing support and coefficients in $[0,1]$. A Kähler metric $\om$ on $X_0:=X\setminus \Supp(D)$ is said to have mixed cone and cusp (also called Poincaré) singularities along $D$ if $\om$ is locally quasi-isometric to the model
\[\om_{\rm mod}:=\sum_{j=1}^r \frac{i dz_j\wedge d\bar z_j}{|z_j|^{2(1-\beta_j)}} + \sum_{k=r+1}^s \frac{i dz_k\wedge d\bar z_k}{|z_k|^{2} \log^{2}|z_k|^{2}}+\sum_{l=r+s+1}^n i dz_l\wedge d\bar z_l \] 
whenever $(X,D)$ is locally isomorphic to $(X_{\rm mod}, D_{\rm mod})$, where $X_{\rm mod}=(\mathbb{D}^*)^r\times (\mathbb{D}^*)^s \times \mathbb{D}^{n-(s+r)}$, $D_{\rm mod}=(1-\beta_1) [z_1=0]+\cdots+(1-\beta_r) [z_r=0]+[z_{r+1}=0]+\cdots + [z_{r+s}=0]$; where $\beta_j \in (0,1)$ and $\mathbb{D}$ (resp. $\mathbb{D}^*$) is the disc (resp. punctured disc) of radius $1/2$ in $\mathbb C$.\\

In \cite{G12} (and later in full generality in \cite{GP}), it was proved that given a compact Kähler manifold $X$ and a divisor $D$ with simple normal crossing support and coefficients in $[0,1]$ such that $K_X+D$ is ample, there exists a unique Kähler metric $\om$ on $X_0$ with mixed cone and cusp singularities along $D$ such that  $\Ric \om = -\om$.

Of course this metric coincides with the Kähler-Einstein metric constructed in \cite{BG} in the more general case of singular log canonical pairs. Our goal in this second section is to generalize the result of \cite{G12, GP} to this singular setting, as we will explain in the next paragraph after recalling the necessary definitions.

\subsection{Log canonical pairs}

\begin{defi}
A log canonical pair $(X,D)$ consists of a complex normal variety $X$ and an \textit{effective} Weil divisor $D$ such that $K_X+D$ is $\Q$-Cartier, and such that for any log resolution $\pi:X'\to X$ of $(X, D)$, the coefficients $a_i$ defined by the formula $K_{X'}=\pi^*(K_X+D)+\sum a_iE_i$ satisfy $a_i \ge -1$ (here $E_i$ is either exceptional or the strict transform of a component of $D$).
\end{defi}

\begin{defi}
Let $(X,D)$ be a log pair. The simple normal crossing (snc) locus of the pair, denoted by $\snc$, is the locus of points $x\in X$ such that the pair $(X,D)$ is log smooth at $x$, i.e., such that there exists a Zariski open set $U\ni x$ satisfying that $U\subset \xreg$ and that the divisor $D_{|U}$ has simple normal crossing support. 
\end{defi}

The snc locus is a Zariski open set whose complement has codimension at least $2$ by normality of $X$. If now $X$ is projective, $(X, D)$ is log canonical and $K_X+D$ is ample, then the main result of \cite{BG} provides a unique \textit{Kähler-Einstein metric} $\omke$ with negative curvature, which is smooth on $\xreg\setminus \Supp(D)$. What about further regularity? So far, it is really hard to tell anything about the local behavior of this metric near the singular points of $X$; but if we look at what happens at points of the boundary divisor $D$ where it is smooth (or merely snc), then we have a better understanding of how $\om$ looks like. Indeed, if $(X,D)$ is klt (i.e., the coefficients $a_i$ above satisfy $a_i>-1$), it was proved first partially in \cite{G2} and then in full generality in \cite{GP} that $\omke$ has cone singularities along $D$ on the snc locus $\snc$. We now aim to generalize this result to the log canonical case:

\begin{theo}
\label{thm2}
Let $(X, D)$ be a projective log canonical pair such that $K_X+D$ is ample. Then its Kähler-Einstein metric $\omke$ has mixed cone and cusp singularities along $D$ on the snc locus $\snc$ of the pair.
\end{theo}

We can deduce from this statement how the Kähler-Einstein metric of a stable variety (i.e., a projective variety $X$ with semi-log canonical singularities such that $K_X$ is ample, cf. \cite{BG}) behaves near the double crossing points. Recall that a double crossing point is a point near which the variety is locally analytically isomorphic to $0\in \{xy=0\}\subset \CC^{n+1}$. 

\begin{coro}
\label{cor:stable}
Let $X$ be a stable variety. Then its Kähler-Einstein metric is locally quasi-isometric to a cusp near the double crossing points.
\end{coro}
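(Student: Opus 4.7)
The plan is to reduce Corollary~\ref{cor:stable} to Theorem~\ref{thm2} by passing to the normalization. Let $\pi: \widetilde{X} \to X$ be the normalization of the stable variety $X$. Since $X$ has semi-log canonical singularities and $K_X$ is ample, the pair $(\widetilde{X}, D)$, where $D$ denotes the conductor divisor, is log canonical with $K_{\widetilde{X}}+D = \pi^* K_X$ ample. Crucially, $D$ is reduced, so each component enters with coefficient $1$, and Theorem~\ref{thm2} applies to this pair.

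Next, I would analyze the local geometry of the normalization near a double crossing point $p \in X$. Since $X$ is locally analytically isomorphic to $\{xy=0\}\subset\mathbb{C}^{n+1}$ near $p$, the two analytic branches are separated by $\pi$: the preimage $\pi^{-1}(p)$ consists of two points $q_1, q_2$, each lying on a smooth branch of $\widetilde{X}$, and in a neighborhood of each $q_i$ the divisor $D$ reduces to a single smooth hypersurface (the preimage of the double crossing locus on the corresponding branch). In particular, $(\widetilde{X}, D)$ is log smooth at each $q_i$, so both $q_i$ lie in the snc locus $\snc$ of the pair.

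By the construction of the Kähler-Einstein metric on stable varieties given in \cite{BG}, the metric $\omke$ on $X$ is obtained from the Kähler-Einstein metric of the log canonical pair $(\widetilde{X}, D)$ via $\pi$, the latter map being a biholomorphism from $\widetilde{X}\setminus D$ onto its image in $X$, which covers $X^{\rm sm}$ minus the double crossing locus near $p$. Applying Theorem~\ref{thm2} to $(\widetilde{X},D)$ at $q_i$, we obtain that $\omke$ has mixed cone and cusp singularities along $D$ near $q_i$. Since $D$ has coefficient $1$ and has only one local component near $q_i$, the cone factor of the model metric is absent and only the Poincaré cusp piece
\[
\frac{i\, dz_1\wedge d\bar z_1}{|z_1|^2 \log^2|z_1|^2} + \sum_{j=2}^n i\, dz_j\wedge d\bar z_j
\]
survives. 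Transferring this local quasi-isometry down to $X$ via $\pi$ yields precisely the cusp model near $p$, which is the claim.

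Since Theorem~\ref{thm2} does the essential analytic work, no serious obstacle is anticipated. The one item that genuinely needs to be spelled out is the compatibility between the Kähler-Einstein metric of the stable variety (as defined in \cite{BG}) and that of the normalized log canonical pair $(\widetilde{X},D)$, together with the explicit local description of the normalization of $\{xy=0\}$ and of its conductor. Both are standard and essentially built into the definitions, so the argument amounts to an honest \emph{translation} of Theorem~\ref{thm2} through the normalization morphism.
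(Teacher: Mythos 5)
Your argument is correct and follows exactly the same route as the paper: pass to the normalization, observe that the pair $(X^{\nu}, D^{\nu})$ is log canonical with $K_{X^{\nu}}+D^{\nu}$ ample and log smooth at the two preimages of a double crossing point, note that the conductor is reduced (coefficient $1$) and locally irreducible there, and invoke Theorem~\ref{thm2} so that the mixed cone-and-cusp model degenerates to the pure cusp. The paper states this more tersely, but every point you spell out is present implicitly.
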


Let us explain what it means. If $p$ is such a point and $\nu:X^{\nu}\to X$ is the normalization morphism, then $\nu^{-1}(p)$ consists of two distinct points $q',q''$ sitting on the conductor divisor $D^{\nu}$, and the pair $(X^{\nu}, D^{\nu})$ is log smooth at $q',q''$ (actually $D^{\nu}$ is even smooth near those points). The corollary expresses that the pull-back $\nu^*\omke$ of the Kähler-Einstein metric has cusp singularities along $D^{\nu}$ near $q'$ and $q''$.

This generalizes the picture existing for stable curves. Indeed, if $C$ is a stable curve, let $C'$ be its normalization and $D'$ be the reduced divisor on $C'$ whose support consists of the preimage of the nodes. Then $K_{C'}+D'$ is ample, and each connected component of $C'\setminus D' $ ($=C_{\rm reg}$) has a unique hyperbolic metric which has a cusp near each point in the support of $D'$.

\begin{proof}
There is not much more left to say. Indeed, with the above notations, the conductor $D^{\nu}$ is a reduced divisor; moreover, $(X^{\nu}, D^{\nu}) $ is log canonical, $K_{X^{\nu}}+D^{\nu}$ is ample, and the pair is log smooth at each point above a double crossing point. So if we apply Theoremt \ref{thm2} at those points, we get exactly the statement claimed in the corollary.
\end{proof}

\subsection{Proof of Theorem B}

\subsubsection{The set-up}
In order to keep more usual notations, we assume that  the initial log pair is $(Y, \Delta)$, and we consider a log resolution $\pi: (X,D) \to (Y, \Delta)$ of the pair. Here, $D=\sum a_i D_i$ is a divisor on $X$ with snc support, consisting of $\pi$-exceptional divisors (with arbitrary coefficients in $(-\infty, 1]$) and of the strict transforms of the components of $\Delta$ (with coefficients in $[0,1]$). The Kähler-Einstein $\omke$ for $(X,D)$, or equivalently the pull-back of the KE metric for $(Y,\Delta)$ by $\pi$ can be written as $\omke= \theta+ \ddc \vp$ where $\theta\in c_1(\pi^*(K_Y+\Delta))$ is a smooth semipositive and big form and $\vp$ is a $\theta$-psh function solving the Monge-Ampère equation
\[\MA(\vp) = \frac{e^{\vp}dV}{\prod_i |s_i|^{2a_i}}\]
where $s_i$ are non-zero sections of $\Ox(D_i)$, $|\cdot |_i$ are smooth hermitian metrics on $\Ox(D_i)$, and $dV$ is a smooth volume form on $X$. 
Let us also introduce as before the convenient notation $\Dlc := \sum_{a_k=1} D_k$.

\noindent
By \cite[Theorem 3.5]{BG} we know that the solution $\vp$ is the limit of the quasi-psh functions $\vpte$ solving

 \begin{equation}
\label{eq:reg2}
\la(\theta+t\om_0+ \ddc \vp_{t,\ep})^n\ra = \frac{e^{\vp_{t,\ep}}dV}{\prod_{a_j<1} (|s_i|^2+\ep ^2)^{a_i} \prod_{a_k=1} |s_k|^2} 
\end{equation}
where $\om_0$ is some fixed Kähler form. 

\noindent
We will divide the proof of Theorem B in three steps. In the first two, we will be dealing with the $L^{\infty}$ estimate on the potential (upper bound then lower bound), and in the last one, 
we will focus on the Laplacian estimate.

\subsubsection{The upper bound}

To find the upper bound, we mimic what we did in the case of log smooth pair, as the loss of positivity will not hinder the previous method. We set $\vp_P:= - \sum_{a_k=1}\log \log^2 |s_k|^2, \ue:=\vpte-\vp_P$, and $\we:=\ue - \sum_{a_i<0} a_i \log (|s_i|^2+\ep)^2$. Actually, $\ue$ and $\we$ depend on $t$, but we choose not to underline this dependence so as to keep the notations lighter. If $\omp$ denotes a metric with Poincaré singularities along $\sum_{a_k=1}D_k$, e.g. $\omp= \om_0+\ddc\vp_P$, then we have
\begin{equation}
\label{eq:ma2}
(\theta+t\om_0+\ddc\vp_P+ \ddc \ue)^n = \frac{e^{\we+\fe}\omp^n}{\prod_{0<a_i<1} (|s_i|^2+\ep^2)^{a_i}}
\end{equation}
for some uniformly bounded function $\fe$ on $X\setminus \Dlc$ (this function does not depend on $t$).

\noindent
The function $\we$ is bounded on the complete manifold $(X\setminus \Dlc,\omp)$ so one can apply Yau's maximum principle to this function. So let $(x_m)$ be a sequence such that $\we(x_m) \to \sup \we$, and $\ddc \we(x_m) \le \frac 1 m \om_P$. As $\ve$ satisfies $\ddc \we \ge \ddc \ue -C \om_0 \ge \ddc \ue -C' \om_P$, we have
\[\theta+t\om_0+\ddc\vp_P+ \ddc \ue \le C \om_P+\ddc \we\]
and therefore
\begin{eqnarray*}
\frac{e^{\we(x_m)+\fe(x_m)}\omp^n(x_m)}{\prod_{0<a_i<1} (|s_i(x_m)|^2+\ep^2)^{a_i}} &=&
(\theta+t\om_0+\ddc\vp_P+ \ddc \ue)^n(x_m)\\
&\le & (C \om_P+\ddc \we)^n(x_m)\\
&\le & (C+1/m) \omp^n(x_m)
\end{eqnarray*}
so that 
\[\we(x_m) \le -\fe(x_m)+\sum_{0<a_i<1} a_i \log (|s_i(x_m)|^2+\ep^2)+\log(C+1/m) \]
hence $\sup \we \le C$, or equivalently
\[\ue \le C + \sum_{a_i<0} a_i \log (|s_i|^2+\ep)^2 \]
But $\ue$ is $(\theta+t\om_0+\ddc \vp)$-psh, hence also $C \omp$-psh, so the arguments of the first part of this article can be applied the same way in this situation, and they yield:
\begin{equation}
\label{eq:sup}
\ue \le C
\end{equation}

\subsubsection{The lower bound} This is where we have to pay for the loss of positivity of $K_X+D$. 
We know that there exists an effective $\R$-divisor $E=\sum c_{\alpha}E_{\alpha}$, $\pi$-exceptional, such that $K_X+D-E=\pi^*(K_Y+\Delta)-E$ is ample. Therefore, one can find a Kähler metric $\om_0$ on $X$, non-zero sections $s_{\alpha}$ of $\Ox(E_{\alpha})$, and hermitian metrics $|\cdotp |_{\alpha}$ on these bundles such that the function $\chi= \sum c_{\alpha}\log |s_{\alpha}|^2$ satisfies:  
\[\theta +\ddc \chi = \om_0+[E] \]

Recall that in section \ref{sec:lb}, we introduced the potential $\psie$ of the regularized cone metric; it is a uniformly bounded $\om_0$-psh function on $X$, such that the metric $\ompe:= (1+t)\om_0 + \ddc \vp_P+\ddc \psie$ on $X\setminus \Dlc$ is complete, with bounded bisectional curvature and satisfies
\[\prod_{0<a_i<1} (|s_i|^2+\ep^2)^{-a_i} \omp^n = e^{G_{\ep}} \ompe^n\]
for some smooth function $G_{\ep}$ which is uniformly bounded in $\ep$ (and $t$, which is why we choose not to emphasize the dependence of $G_{\ep}$ on $t$). One should emphasize that this metric $\ompe$ has approximate cone singularities not only along the strict transform of $\Delta - \lceil \Delta \rceil $ but also along some exceptional divisors. Setting $\ve:=\ue-\psie-\chi$, equation \eqref{eq:reg2} becomes, on $X\setminus E$:
\begin{equation}
\label{eq5}
(\ompe+\ddc \ve)^n = e^{\ve+F_{\ep}} \ompe^n
\end{equation}
where $F_{\ep}=G_{\ep}+\psie+\chi-\sum_{a_i<0} a_i \log(|s_i|^2+\ep^2)$, and from the remarks above, $\sup \Fe \le C$ for some $C$ independent of $\ep$ and $t$. The job would be done if one could apply Yau's maximum principle to $\ve$ on the complete manifold $(X\setminus \Dlc,\ompe)$. But $\ve$ is not smooth along $E$, so we should be careful. Fortunately, $\ve=-\chi+O(1)$ tends to $+\infty$ near $E$, so one can run the proof of Yau's maximum principle without any change: let us first introduce, for every positive integer $m$, the function $h_m:=\ve - \frac 1 m \vp_P$. This function is smooth on $X\setminus(\Dlc \cup E)$ and tends to $+\infty$ near the boundary. Therefore, it attains its minimum at some point $x_m$ in $X\setminus(\Dlc \cup E)$. 
Then, $0 \le \ddc h_m (x_m)= \ddc \ve(x_m)- \frac 1 m \ddc \vp_P(x_m)$ so that $\ddc \ve(x_m) \ge - \frac C m \om_0(x_m) \ge -\frac{C'}m \ompe(x_m)$ as $\vp_P$ and $\psie$ are uniformly quasi-psh. Plugging this inequality into \eqref{eq5}, we find $\inf \ve \ge - \sup \Fe \ge -C$, hence
\begin{equation}
\label{eq:inf}
 \ue \ge C+\chi 
 \end{equation}

\subsubsection{The Laplacian estimate}
The metric $\ompe$ on $X\setminus \Dlc$ is complete and has bounded curvature, but when $\ep$ goes to zero, its curvature may blow up (in both directions) due to the conic part. 
In \cite{GP}, a new Laplacian estimate has been introduced to deal specifically with that kind of geometries (cf. Section 6.3). More precisely, if we write $\om:= \ompe$ and $\om'= \ompe+\ddc \ve$, then we get from \eqref{eq5} that $\om'^n=e^{\ve+\Fe}\om^n$, and it is shown in \cite{GP} that there exists a smooth and uniformly bounded function $\Psie$ on $X$ satisfying on $X\setminus (\Dlc \cup E)$:
\[\Delta_{\om'}(\log \tr_{\om}\om'+\Psie) \ge -C \tr_{\om'}\om\]
for some constant $C$ \textit{independent of} $\ep$. This constant takes into account a lower bound for the $\om$-Laplacian of $\ve+\Fe$ (the existence of this bound is also proved in \cite{GP}).
As $\om'=\om + \ddc \ve$, we infer:
\[\Delta_{\om'}(\log \tr_{\om}\om'+\Psie-(C+1)\ve) \ge  \tr_{\om'}\om-n(C+1)\]
The function inside the Laplacian is smooth on $X\setminus (\Dlc \cup E)$ and tends to $-\infty$ near $E$. Therefore, one can apply the same maximum principle as in the last subsection: introduce $H:=\log \tr_{\om}\om'+\Psie-(C+1)\ve$ and $H_m:=H+\frac 1 m \vp_P$. By construction $H_m$ tends to $-\infty$ near $\Dlc$ and $E$, so we can choose a point $x_m$ outside of these divisors where $H_m$ attains its maximum. At this point, we have $0 \ge \ddc H_m = \ddc H+ \frac 1 m \ddc \vp$ so that at this point again, we find $\ddc H \le \frac C m \om_0 \le \frac{C_{\ep}}{m} \om'$. 
Using a basic inequality, we find 
\begin{equation}
\label{eq:log}
\log \tr_{\om}\om'(x_m) \le (\ve+\Fe)(x_m)+(n-1)\log\left[n(C+1)+nC_{\ep}/m\right]
\end{equation}
Therefore, as $\Psie$ is uniformly bounded and $\ve$ is uniformly bounded below, we have:
\begin{eqnarray*}
\log \tr_{\om}\om' &= & H+(C+1)\ve - \Psie \\
& \le & \sup_m H(x_m) + (C+1)\ve +C \\
& \underset{\eqref{eq:log}}{\le} & \sup_m \, \big[(\ve+\Fe)(x_m)+(n-1)\log\left[n(C+1)+nC_{\ep}/m\right]\\
& & -(C+1)\ve)(x_m)\big]+(C+1)\ve +C\\
&\le & -C \inf \ve +\sup \Fe + (C+1)\ve + C\\
& \underset{\eqref{eq:sup}-\eqref{eq:inf}}{\le}& C-C\chi
\end{eqnarray*}
So in the end, we have proved that the approximate KE metric $\ompe+ \ddc \ve$ satisfies on $X\setminus E$:
\[C^{-1}e^{C\chi} \ompe \le \ompe + \ddc \ve \le Ce^{-C\chi} \ompe\]
for some constant $C>0$ independent of $\ep$ and $t$. As $\chi$ is locally bounded on $X\setminus E= \pi^{-1}((Y,\Delta)_{\rm reg})$, this ends the proof of Theorem B.

\bibliographystyle{smfalpha}
\bibliography{biblio}

\end{document}